\numberwithin{equation}{subsection}
\newtheorem{theorem}{Theorem}
\newtheorem{definition}[theorem]{Definition}
\newtheorem{classification-theorem}[subsection]{Classification Theorem}
\newtheorem{decomposition-theorem}[subsection]{Decomposition Theorem}
\newtheorem{proposition-definition}[subsection]{Proposition-Definition}
\newtheorem{definition-proposition}[subsection]{Definition-Proposition}
\newtheorem{example-definition}[subsection]{Example-Definition}
\newtheorem{periodicity-conjecture}[subsection]{Periodicity Conjecture}
\newtheorem{lemma}[theorem]{Lemma}
\newtheorem{proposition}[theorem]{Proposition}
\newtheorem{example}[theorem]{Example}
\newtheorem{remark}[theorem]{Remark}
\newtheorem{Definition-Proposition}[subsection]{D\'efinition-Proposition}
\newcommand{\reminder}[1]{}
\newcommand{\rep}{\mathrm{rep}}
\newcommand{\tr}{\mathrm{tr}}
\newcommand{\dgcat}{\mathrm{dgcat}}
\newcommand{\Hqe}{\mathrm{Hqe}}
\renewcommand{\rep}{\mathrm{rep}}
\newcommand{\pretr}{\mathrm{pretr} }
\renewcommand{\ker}{\mathrm{ker} }
\newcommand{\Q}{\mathbb{Q}}
\newcommand{\iso}{\xrightarrow{_\sim}}
\newcommand{\pr}{\mathrm{pr}}
\newcommand{\Def}{\mathrm{def}\kern 0.1em}
\newcommand{\D}{\mathcal {D}}
\newcommand{\A}{\mathcal {A}}
\newcommand{\B}{\mathcal {B}}
\newcommand{\C}{\mathcal {C}}
\newcommand{\J}{\mathcal {J}}
\newcommand{\I}{\mathcal {I}}
\newcommand{\T}{\mathcal T}
\renewcommand{\P}{\mathcal P}
\newcommand{\Hom}{\mathrm{Hom}}
\newcommand{\Ext}{\mathrm{Ext}}
\newcommand{\Mor}{\mathrm{Mor}}
\newcommand{\ca}{{\mathcal A}}
\renewcommand{\phi}{\varphi}
\renewcommand{\Q}{\mathcal{Q}}
\begin{document}
\title[0-Auslander correspondence]{0-Auslander correspondence}
\author[Xiaofa Chen] {Xiaofa Chen}
\address{University of Science and Technology of China, Hefei, P.~R.~China}
\email{cxf2011@mail.ustc.edu.cn}

\subjclass[2010]{18G25, 18E30, 16E30, 16E45}
\date{\today}

\keywords{extriangulated category, 0-Auslander category, exact dg category, derived dg category, subcategory stable under extensions}%

\begin{abstract} We prove an analogue of Auslander correspondence for
exact dg categories whose $H^0$-category is $0$-Auslander in the
sense of Gorsky--Nakaoka--Palu.
\end{abstract}

\maketitle
%\dedicatory{}%
%\commby{}%

Let $k$ be a commutative ring. 
We denote by $\dgcat$ the category of dg $k$-categories and by $\Hqe$ the localization of $\dgcat$ with respect to the quasi-equivalences, cf.~\cite{Tabuada05}. 
For a dg category $\A$, we denote by $H^0(\A)$ its 0-homology category, i.e.~the category with the same objects as $\A$ and whose morphism spaces are given by $H^0(\A)(A_1,A_2)=H^0(\A(A_1,A_2))$. We denote by $\D(\A)$ its derived category. The dg category $\A$ is {\em pretriangulated} if the essential image of
the Yoneda embedding $H^0(\A)\rightarrow \D(\A)$ is a triangulated subcategory.
For a dg category $\A$, we denote its {\em pretriangulated hull} by $\pretr(\A)$, cf.~\cite{BondalKapranov90, Drinfeld04, BondalLarsenLunts04}. Put $\tr(\A)=H^0(\pretr(\A))$.
We refer to \cite{Chen23} for the definition and the fundamental properties of an {\em exact dg category}.
Each pretriangulated category endowed with the class of all graded-split exact sequences is an exact 
dg category.  Also each dg category $\A$ such that $H^0(\A)$ is an additive category is an exact dg category when endowed with the class of all split exact sequences. 
A full dg subcategory $\B$ of an exact dg category $\A$ which is stable under extensions inherits from $\A$ a canonical exact structure. 
By \cite[Theorem 6.1]{Chen23}, for each connective exact dg category $\A$, there exists a universal exact morphism $F:\A\rightarrow \D^b_{dg}(\A)$ in $\Hqe$ from $\A$ to a pretriangulated dg category $\D^b_{dg}(\A)$ which induces a quasi-equivalence of exact dg categories from $\tau_{\leq 0}\A$ to $\tau_{\leq 0}\D'$ 
for an extension-closed dg subcategory $\D'$ of $\D^b_{dg}(\A)$. Put $\D^b(\A)=H^0(\D^b_{dg}(\A))$. For a triangulated category $\T$ and full subcategories $\C$ and $\D$ of $\T$, we denote by $\C\ast\D$ the full subcategory of $\T$ on the objects $X$ which admits a triangle $C\rightarrow X\rightarrow D\rightarrow \Sigma C$ in $\T$ where $C\in \C$ and $D\in\D$.

 It is shown in \cite{Chen23} that if $\A$ is an exact
dg $k$-category, then the category $H^0(\ca)$ inherits a natural extriangulated
structure in the sense of \cite{NakaokaPalu19}. An object $A\in \A$ is {\em projective}
 (resp.~{\em injective}) if it is projective (resp.~injective) 
 in the extriangulated category $H^0(\A)$. We define an exact dg
category $\A$ to be {\em $0$-Auslander} if $H^0(\A)$ is $0$-Auslander in 
the sense of \cite{GorskyNakaokaPalu23}. The following theorem is
a $0$-dimensional analogue of higher Auslander--Iyama correspondence
\cite{Auslander71, Iyama07}. It is inspired by recent work of
Fang--Gorsky--Palu--Plamondon--Pressland \cite{FangGorskyPaluPlamondonPressland23}.

\begin{theorem}[0-Auslander correspondence]\label{thm:0-Auslander correspondence}
There is a bijective correspondence between the following:
\begin{itemize}
\item[(1)] equivalence classes of connective exact dg categories $\A$ which are $0$-Auslander;
\item[(2)] equivalence classes of pairs $(\P,\I)$ consisting of 
\subitem- a connective additive dg category $\P$ and
\subitem- a dg subcategory $\I\subset \P$ such that $H^0(\I)$ is covariantly finite in $H^0(\P)$.
\end{itemize}
The bijection from $(1)$ to $(2)$ sends $\A$ to the pair $(\P,\I)$ formed by  the full dg subcategory $\P$ on the projectives of $\A$ and its full dg subcategory $\I$ of projective-injectives. The inverse bijection sends $(\P,\I)$ to the $\tau_{\leq 0}$-truncation of the  dg subcategory $(\P*\Sigma\P)\cap \ker\, \Ext^1(-,\I)$ of $\pretr(\P)$.
\end{theorem}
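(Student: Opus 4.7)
The plan is to verify that each map is well-defined on its respective side and that the two are mutually inverse. The primary tool is the universal exact embedding $\A \hookrightarrow \D^b_{dg}(\A)$ of \cite[Theorem 6.1]{Chen23}, together with the characterization of $0$-Auslander extriangulated categories from \cite{GorskyNakaokaPalu23}.

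For the forward map, let $\A$ be connective and $0$-Auslander. The full dg subcategory $\P$ on projectives is automatically connective, and $H^0(\P)$ is additive as a summand-closed subcategory of $H^0(\A)$. To see $H^0(\I) \subset H^0(\P)$ is covariantly finite, I apply the $0$-Auslander axiom to a projective $P$ to obtain a conflation $P \to I^0 \to P^1$ with $I^0 \in H^0(\I)$ and $P^1 \in H^0(\P)$. Since $\Ext^1(P^1, I') = 0$ for any $I' \in H^0(\I)$ (as $I'$ is injective), every map $P \to I'$ factors through $I^0$, making $P \to I^0$ a left $H^0(\I)$-approximation.

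For the backward map, given $(\P, \I)$, I form $\B = (\P * \Sigma\P) \cap \ker\, \Ext^1(-, \I) \subset \pretr(\P)$ and set $\A = \tau_{\leq 0}\B$. The crucial technical claim, which I expect to be the main obstacle, is that $\B$ is stable under extensions in $\pretr(\P)$; granted this, $\B$ inherits an exact dg structure and so does $\A = \tau_{\leq 0}\B$, which is connective by construction. In a triangle $X \to Y \to Z \to \Sigma X$ of $\pretr(\P)$ with $X, Z \in \B$, the $\Ext^1(-, \I)$-vanishing on $Y$ is immediate from the long exact sequence; the nontrivial point is $Y \in \P * \Sigma\P$. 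This should follow from an octahedral argument that collapses the naive $3$-step filtration of $Y$ into a $2$-step one, using the covariant finiteness of $H^0(\I) \subset H^0(\P)$ together with the $\Ext^1(-, \I) = 0$ hypothesis to arrange the relevant connecting maps through $\I$. That $H^0(\A)$ is then $0$-Auslander follows because the projectives are identified with the objects of $\P \subset \P*\Sigma\P$, and the $0$-Auslander conflations arise by rotating a representing triangle $P_0 \to Y \to \Sigma P_1$ and factoring the boundary through a left $\I$-approximation supplied by covariant finiteness.

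For the two round-trip identities: in $\A \mapsto (\P, \I) \mapsto \A'$, each object $A \in \A$ has projective dimension at most one (from its $0$-Auslander conflation $A \to I \to P$), whence the embedding $\A \hookrightarrow \D^b_{dg}(\A)$ lands inside $\pretr(\P)$ under the identification $\D^b(\A) \simeq \tr(\P)$; this places $A$ in $\P * \Sigma\P$, and $\Ext^1(A, I) = 0$ for $I \in \I$ holds because $I$ is injective in $\A$, so the induced map $\A \to \A'$ is a quasi-equivalence of exact dg categories. In $(\P, \I) \mapsto \A \mapsto (\P', \I')$, the projectives of the reconstructed $\A$ are precisely the objects of $\P$ by construction, and an object $P \in \P$ is injective in $H^0(\A)$ iff $\Ext^1_{\pretr(\P)}(Y, P) = 0$ for every $Y \in \B$, which by testing on $Y = \mathrm{cone}(P_1 \to P_0)$ and using covariant finiteness is equivalent to $P$ being a direct summand of an object of $\I$, hence $P \in \I$.
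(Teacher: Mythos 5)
Your overall architecture (forward map via projectives and projective--injectives, backward map via $(\P*\Sigma\P)\cap\ker\Ext^1(-,\I)$ inside $\pretr(\P)$, then two round trips) is the same as the paper's, but there is a genuine gap at the round trips, and a misdiagnosis in the extension-closure step.

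The main gap: you assert that for $\A\mapsto(\P,\I)\mapsto\A'$ the induced map $\A\to\A'$ is a quasi-equivalence, and dually that the projectives of the reconstructed category are \emph{precisely} the objects of $\P$. Neither holds in general, because the subcategory $(\P*\Sigma\P)\cap\ker\Ext^1(-,\I)$ is automatically closed under certain direct summands that $H^0(\A)$ (resp.\ $H^0(\P)$) need not contain. Concretely, the paper's Proposition~\ref{prop:0-Auslander} only shows that an object $X$ of $\A_{\P,\I}$ satisfies $X\oplus I\iso U$ for some $U\in\A'$ and $I\in\I$ (diagram~\ref{dia:Y}), i.e.\ $X$ lies in the closure $\overline{\A'}$ of $\A'$ under kernels of retractions, not in $\A'$ itself; quasi-equivalence is recovered only when $\I=0$. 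Similarly, a projective of the reconstructed category is an object whose defining triangle $P_1\to P_2\to X\to\Sigma P_1$ splits, hence a \emph{summand} of an object of $\P$ (the category $\Q_{\P,\I}$ of the paper), not necessarily an object of $\P$. This is exactly why the theorem's ``equivalence classes'' are taken with respect to the relation of Definition~\ref{def:equ} (quasi-equivalence after closing under kernels of retractions, resp.\ after passing to $\Q_{\P,\I}$ and $\J_{\P,\I}$), and why the paper needs Remark~\ref{rmk: equivalence 0-Auslander} (invariance of the $0$-Auslander property under this relation) and Remark~\ref{rmk:weakly idempotent complete} to see that both maps are well defined on equivalence classes. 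Your proposal, which implicitly reads ``equivalence'' as quasi-equivalence, would make the correspondence fail to be a bijection; you never address well-definedness on classes at all.

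A secondary point: you flag the extension-closure of $\P*\Sigma\P$ in $\pretr(\P)$ as the main technical obstacle and propose to prove it using covariant finiteness of $H^0(\I)$ and the $\Ext^1(-,\I)$-vanishing. That is the wrong mechanism. The correct (and much simpler) reason is connectivity of $\P$: one needs $\Sigma\P*\P\subseteq\P*\Sigma\P$, which follows because the connecting maps live in $\Hom_{\tr(\P)}(P,\Sigma^2P')=H^2(\P(P,P'))=0$. The subcategory $\I$ plays no role there; covariant finiteness is instead what produces the dominant-dimension-one conflations (triangle~\ref{tri:I} and diagram~\ref{dia:X} in Lemma~\ref{lem:0-Auslander}), which you do identify correctly. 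Your argument for covariant finiteness of $H^0(\I)$ in the forward direction is fine.
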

We will prove the theorem after Proposition~\ref{prop:0-Auslander}.
Clearly, under the correspondence of the theorem,
\begin{itemize}
\item[a)] the exact dg categories in $(1)$ whose corresponding extriangulated category is moreover reduced correspond to the pairs in $(2)$ where $\I=0$; 
\item[b)] the exact dg categories in $(1)$ with the trivial exact structure (so that each object is both projective and injective) correspond to the pairs in $(2)$ where $\I=\P$.
%\item[c)] the exact dg categories in $(1)$ concentrated in degree 0, or equivalently whose corresponding extriangulated category is moreover exact, correspond to the pairs in $(2)$ where $\P$ is concentrated in degree $0$ and where each object $P$ in $\P$ admits a monomorphism $P\rightarrow I$ where $I$ lies in $\I$; in this case, the corresponding extriangulated category is the full subcategory of $\mathcal K^{[-1,0]}(\P)$ on the objects $P^{-1}\xrightarrow{f} P^{0}$ where $f$ is a monomorphisms in $\P$ such that $\Hom(f,\I):\Hom_{\P}(P^0,\I)\rightarrow \Hom_{\P}(P^{-1},\I)$ is a surjection;
%\item[d)] the exact dg categories in $(1)$ whose corresponding extriangulated category is abelian correspond to the pairs in $(2)$ where $\P$ is concentrated in degree $0$ and where each object $P$ in $\P$ admits a monomorphism $P\rightarrow I$ where $I$ lies in $\I$ and where each morphism in $\P$ can be written as a direct sum of $Q^{-1}\rightarrow 0$ and a monomorphism $P^{-1}\rightarrow P^0$; in this case, the corresponding extriangulated category is equivalent to the full subcategory of $\Mod\P$ consisting of modules of projective dimension at most $1$.
\end{itemize}
For a pair $(\P,\I)$ in $(2)$, we denote by $\A_{\P,\I}$ the dg subcategory $\P*\Sigma\P\cap \ker(\Ext^1(-,\I))$ of $\pretr(\P)$ and by $\Q_{\P,\I}$ (resp.~$\J_{\P,\I}$) be the full dg subcategory of $\A_{\P,\I}$ whose objects are the direct summands in $H^0(\A_{\P,\I})$ of objects in $H^0(\P)\subset H^0(\A_{\P,\I})$ (resp.~$H^0(\I)\subset H^0(\A_{\P,\I})$).
Clearly $\Q_{\P,\I}$ is stable under kernels of retractions in $\pretr(\Q_{\P,\I})\iso \pretr(\P)$.
Below we will show that $H^0(\A_{\P,\I})$ is {\em weakly idempotent complete}, i.e.~retractions of epimorphisms possess kernels, cf.~the proof of Theorem~\ref{thm:0-Auslander correspondence}.

For a connective exact dg category $\A$, we denote by $\overline{\A}$ the full dg subcategory of $\D^{b}_{dg}(\A)$ consisting of objects in the closure under kernels of retractions of $H^0(\A)$ in $\D^{b}(\A)$. Clearly $\overline{\A}$ is stable under extensions in $\D^b_{dg}(\A)$ and the thus inherited exact structure induces a quasi-equivalence $\D^b_{dg}(\overline{\A})\iso \D^b_{dg}(\A)$.

\begin{definition}\label{def:equ}
Two connective exact dg categories $\A$ and $\B$ are {\em equivalent} if there is a quasi-equivalence $\D^b_{dg}(\A)\iso \D^b_{dg}(\B)$ inducing a quasi-equivalence $\overline{\A}\iso \overline{\B}$.
 
Two pairs $(\P,\I)$ and $(\P',\I')$ in $(2)$ are {\em equivalent} if there is a quasi-equivalence $\pretr(\P)\iso \pretr(\P')$ inducing quasi-equivalences $\Q_{\P,\I}\iso \Q_{\P',\I'}$ and $ \J_{\P,\I}\iso \J_{\P',\I'}$.
\end{definition}
\begin{remark}\label{rmk: equivalence 0-Auslander}
A connective exact dg category $\A$ is $0$-Auslander if and only if $\overline{\A}$ is $0$-Auslander.
\end{remark}
\begin{proof}Let $\P$ (resp.~$\I$) be the full dg subcategory on the projectives (resp.~projective-injectives) of $\A$.
 Let $X\in \overline{\A}$. Then we have $Y$ and $Z$ in $\A$ such that $Y\iso X\oplus Z$. We have a conflation in $\A$
\[
M\rightarrow P\rightarrow Y
\]
where $P$ is projective in $H^0(\A)$. Then we have the following diagram in $\D^b(\A)$
\begin{equation}\label{dia:octa}
\begin{tikzcd}
M\ar[r]\ar[d,equal]&N\ar[r]\ar[d]&Z\ar[d]\ar[r]&\Sigma M\ar[d,equal]\\
M\ar[r]&P\ar[r]\ar[d]&Y\ar[d]\ar[r]&\Sigma M\\
&X\ar[r,equal]&X&
\end{tikzcd}
\end{equation}
Then it is clear that $N$ lies in $\A$ and hence $\overline{\A}$ has enough projectives.

To see that $\overline{\A}$ is hereditary, let us assume that $M$ is in $\P$. 
The canonical map $P\rightarrow Y\twoheadrightarrow Z$ extends to a triangle in $\D^b(\A)$
\begin{equation}\label{tri:Z}
P\rightarrow Z\rightarrow L\rightarrow \Sigma P.
\end{equation}
Since $\Sigma^{-1}L$ is an extension of $M$ and $X$ in $\D^b(\A)$, we have $\Hom_{\D^b(\A)}(\P,L)=0$. Since $L$ is an extension of $Z$ and $\Sigma P$, we have $\Hom_{\D^b(\A)}(L,\Sigma^2 N)=0$.

We have the following diagram in $\D^b(\A)$
\begin{equation}\label{dia:octa2}
\begin{tikzcd}
                             &P\ar[r,equal]\ar[d]                   &P\ar[d]                                                  &\\
P\ar[r] \ar[d,equal]&X\oplus Z\ar[r]\ar[d]                       & X\oplus  L \ar[d]\ar[r]                &\Sigma P\ar[d,equal] \\
P\ar[r]                   &\Sigma M\ar[r]                               &  \Sigma M\oplus \Sigma P\ar[r]                                                 &\Sigma P
\end{tikzcd}
\end{equation}
 and this gives rise to the following diagram in $\D^b(\A)$
 \begin{equation}\label{dia:octa3}
 \begin{tikzcd}
 P\ar[d,equal]\ar[r]&X\ar[d]\ar[r]&\Sigma N\ar[d]\ar[r]&\Sigma P\ar[d]\\
 P\ar[r]&X\oplus L\ar[d]\ar[r]&\Sigma M\oplus \Sigma P\ar[r]\ar[d]&\Sigma P\\
 &L\ar[r,equal]&L&
 \end{tikzcd}.
 \end{equation}
Since $\Sigma^{-1}L$ lies in $\overline{\A}$, by the third column we have that $N\in \overline{\A}$ and is projective in $\overline{\A}$.
 Hence we have that $\overline{\A}$ is hereditary.
 
 Suppose $X\in \overline{\A}$ is projective. 
 Since $\A$ has enough projectives, we have $Y$ and $Z$ in $\P$ such that $Y\iso X\oplus Z$.
 We have a conflation in $\A$
 \[
 Y\rightarrow I\rightarrow U
 \]
 where $I\in\I$ and $U$ is injective in $\A$.
 So we have the following diagram in $\D^b(\A)$
 \[
 \begin{tikzcd}
 X\ar[r,equal]\ar[d]&X\ar[d]&&\\
 Y\ar[r]\ar[d]&I\ar[r]\ar[d]&U\ar[d,equal]\ar[r]&\Sigma Y\ar[d]\\
 Z\ar[r]&V\ar[r]&U\ar[r]&\Sigma Z
 \end{tikzcd}
 \]
 Then $V\in \A$ and the second column shows that $X$ has dominant dimension at least 1.
 
\end{proof}
\begin{remark}\label{rmk:weakly idempotent complete}
If $H^0(\P)$ is weakly idempotent complete, then so is $H^0(\A_{\P,\I})$.
\end{remark}
\begin{proof}
Put $\A=\A_{\P,\I}$. Suppose we have $Y\iso X\oplus Z$ in $\tr(\P)$ where both $Y$ and $Z$ are in $\A$.
We have the diagram~\ref{dia:octa} in $\tr(\P)$ where both $M$ and $P$ are in $\P$. It is enough to show that $N\in\P$.
We have the triangle~\ref{tri:Z} and the diagram~\ref{dia:octa3} in $\tr(\P)$.
We have $M\oplus P\iso N\oplus \Sigma^{-1}L$. Since $N$ is an extension of $M$ and $Z$, it lies in $\A$ and in particular it lies in $\pr(\P)$.
Since $\P$ is weakly idempotent complete, we have that $N$ lies in $\P$.
\end{proof}
We start by showing that for a pair $(\P,\I)$ in $(2)$, the dg subcategory $\A_{\P,\I}$ of $\pretr(\P)$ inherits a canonical exact dg structure whose corresponding extriangulated category is 0-Auslander.
\begin{lemma}\label{lem:0-Auslander}
\begin{itemize}
\item[(1)] Let $\P$ be a connective additive dg category and $\I\subseteq \P$ a full dg subcategory such that $H^0(\I)$ is covariantly finite in $H^0(\P)$. 
%Suppose $H^0(\P)$ is closed under direct summand in $\tr(\P)$. 
Denote by $\pr(\P)$ the full dg subcategory of $\pretr(\P)$ consisting of the objects $X$ in $\P*\Sigma \P$, i.e.~there exist a triangle 
\begin{equation}\label{tri:X}
P_1\xrightarrow{f} P_2\xrightarrow{g} X\xrightarrow{h} \Sigma P_1
\end{equation}
in $\tr(\P)$ where $P_i$, $i=1$, $2$, are objects in $\P$ and we omit the symbol $^{\wedge}$.
% and $H^0(\I)$ is closed under direct summands in $H^0(\P)$. 
Then the full subcategory $H^0(\A_{\P,\I})$ of $\tr(\P)$ is extension-closed and the inherited extriangulated structure is 0-Auslander. 
Put $\A=\tau_{\leq 0}\A_{\P,\I}$.
%We denote the $\tau_{\leq 0}$-truncation of the corresponding full dg subcategory of $\pretr(\P)$ by $\A$.
\item [(2)] %Let $\Q$ (resp.~$\J$) be the full dg subcategory of $\A$ whose objects are the direct summands in $H^0(\A)$ of objects in $H^0(\P)\subset H^0(\A)$ (resp.~$H^0(\I)\subset H^0(\A)$) (so $H^0(\J)$ is covariantly finite in $H^0(\Q)$). 
We have that $\Q_{\P,\I}$ is the full dg subcategory of projectives in $\A_{\P,\I}$ and $\J_{\P,\I}$ is the full dg subcategory of projective-injectives in $\A_{\P,\I}$. Moreover $(\Q_{\P,\I},\J_{\P,\I})$ is equivalent to $(\P,\I)$.
\item [(3)]The dg derived category of $\A$ is $\pretr(\P)$.
\end{itemize}
\end{lemma}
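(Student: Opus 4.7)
The plan is to work through the three parts in order, using connectivity of $\P$ and the covariant finiteness of $H^0(\I)$ as the two main leverage points.

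For (1), I first observe that every $P \in \P$ lies in $H^0(\A_{\P,\I})$: the trivial triangle $0 \to P \to P \to 0$ places $P$ in $\P \ast \Sigma \P$, and $\Ext^1_{\tr(\P)}(P, I) = H^1\Hom_{\P}(P, I) = 0$ for $I \in \I$ by connectivity. For extension-closedness, I would show $\P \ast \Sigma \P$ is extension-closed in $\tr(\P)$ via a standard octahedral argument: given resolutions $P_1 \to P_2 \to X$ and $Q_1 \to Q_2 \to Y$ and a triangle $X \to Z \to Y \to \Sigma X$, the map $\Sigma^{-1}Y \to X$ factors through $Q_1 \to X$ (since $\Hom(\Sigma^{-1}Q_2, X) = 0$ by connectivity) and lifts to $Q_1 \to P_2$ (since $\Hom(Q_1, \Sigma P_1) = 0$). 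A routine diagram yields a triangle $P_1 \oplus Q_1 \to P_2 \oplus Q_2 \to Z \to \Sigma(P_1 \oplus Q_1)$, placing $Z \in \P \ast \Sigma \P$. Intersecting with $\ker \Ext^1(-,\I)$ preserves extension-closedness via the long exact sequence of $\Ext^*(-, I)$, and the conflations in $\A_{\P,\I}$ are exactly the triangles of $\tr(\P)$ with all three terms in $\A_{\P,\I}$.

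Each $P \in \P$ is projective in $\A_{\P,\I}$: applying $\Hom(P, -)$ to the resolution of any $Y \in \A_{\P,\I}$ and invoking connectivity gives $\Hom(P, \Sigma Y) = 0$, hence $\Ext^1_{\A}(P, Y) = 0$. Thus the defining triangle $P_1 \to P_2 \to X$ is already a conflation, providing enough projectives, and by splitting of $P_2 \twoheadrightarrow X$ when $X$ is projective, the projectives coincide with $\Q_{\P,\I}$. Hereditariness follows since the kernel $P_1$ of the projective cover also lies in $\P$. For dominant dimension at least one, for each $P \in \P$ I take a left $\I$-approximation $P \to I$ (using covariant finiteness of $H^0(\I)$) and extend it to a triangle $P \to I \to Z \to \Sigma P$ in $\tr(\P)$. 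Rotation places $Z$ in $I \ast \Sigma P \subseteq \P \ast \Sigma \P$, and applying $\Hom(-, I')$ gives $\Ext^1(Z, I') = 0$ from the surjectivity of $\Hom(I, I') \to \Hom(P, I')$ (approximation property) together with $\Ext^1(I, I') = 0$ (connectivity). So $P \to I \to Z$ is a conflation with $I$ projective-injective, which is the 0-Auslander property.

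For (2), the same approximation argument identifies $\J_{\P,\I}$ with the projective-injectives: objects of $\I$ are injective by the very definition of $\A_{\P,\I}$, and conversely any projective-injective $X \in \Q_{\P,\I}$ fits into a conflation $X \to I \to Z$ constructed from its left $\I$-approximation, which splits by injectivity of $X$, exhibiting $X$ as a summand of $I \in \I$. The equivalence of $(\Q_{\P,\I}, \J_{\P,\I})$ with $(\P, \I)$ in the sense of Definition~\ref{def:equ} follows from $\pretr(\Q_{\P,\I}) = \pretr(\P)$ inside $\pretr(\P)$: the identity on $\pretr(\P)$ serves as the ambient quasi-equivalence, and it restricts correctly on the distinguished subcategories. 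For (3), the inclusion $\A \hookrightarrow \pretr(\P)$ is an exact morphism in $\Hqe$ (conflations are inherited from triangles), and since $\P \subseteq \A$ generates $\pretr(\P)$ as a pretriangulated dg category, the universal property of \cite[Theorem~6.1]{Chen23} identifies $\D^b_{dg}(\A) \iso \pretr(\P)$ with $\A_{\P,\I}$ playing the role of the extension-closed $\D'$. The main obstacle is the extension-closedness of $\P \ast \Sigma \P$ and the verification that the left $\I$-approximation produces a cone inside $\A_{\P,\I}$; once those diagram chases are established, the 0-Auslander axioms and the identification of $\D^b_{dg}(\A)$ follow cleanly.
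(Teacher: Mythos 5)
Your proposal is correct and follows essentially the same route as the paper: extension-closedness of $\P*\Sigma\P$ from connectivity of $\P$, identification of the projectives by splitting the defining triangle, heredity from $P_1\in\P$, dominant dimension via cones of left $\I$-approximations, and identification of the projective-injectives by splitting the resulting coresolution. The one point to tighten is that the projectives of $H^0(\A_{\P,\I})$ are the \emph{summands} of objects of $H^0(\P)$, not only the objects of $\P$ themselves, so the dominant-dimension conflation must also be produced for a summand $X$ of some $P\in\P$; this is done by composing the section $X\to P$ with the left $\I$-approximation $P\to I$ and applying one further octahedron (this is exactly the paper's diagram~\eqref{dia:X}) to see that the cone of $X\to I$ is an extension of the complement of $X$ by the cone of $P\to I$ and therefore stays in $\A_{\P,\I}$.
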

\begin{proof}
Since $\P$ is connective, $\pr(\P)$ is extension-closed in $\pretr(\P)$ and hence $\A$ is also extension-closed in $\pretr(\P)$.

It is clear that $\Q_{\P,\I}$ is the full dg subcategory of projective objects: if $X$ is projective in $\A$, then the triangle \ref{tri:X} splits.

It is also clear that the objects in $\I$ are projective-injective.
 
Let us show that $H^0(\A)$ has dominant dimension at least $1$: if $X$ is projective in $\A$, then the triangle \ref{tri:X} splits.
Since $H^0(\I)$ is covariantly finite in $H^0(\P)$, we have a triangle
\begin{equation}\label{tri:I}
P_2\xrightarrow{u} I\xrightarrow{v} Y\xrightarrow{w} \Sigma X
\end{equation}
where $I\in\I$ and $u$ is a left $\I$-approximation of $P_2$. 
Then it is clear that $Y$ lies in $\A$ and that $Y$ is injective in $H^0(\A)$: this follows from the fact that $\Hom_{\tr(\P)}(A,\Sigma^2A')=0$ for $A$ and $A'$ in $\A$ and that $I$ is injective in $H^0(\A)$.
We have the following diagram
\begin{equation}
\begin{tikzcd}\label{dia:X}
X\ar[r,equal]\ar[d]&X\ar[d]&&\\
P_2\ar[r,"u"]\ar[d]&I\ar[r,"v"]\ar[d]&Y\ar[d,equal]\ar[r]&\Sigma P_2\ar[d]\\
P_1\ar[r]&U\ar[r]&Y\ar[r]&\Sigma P_1
\end{tikzcd}.
\end{equation}
Since $U$ is an extension of $P_1$ and $Y$, it lies in $\A$. It is clear that $U$ is injective in $\A$.
So the middle column shows that the dominant dimension of $X$ is a least 1.

If $X$ is moreover injective, then the triangle in the middle column splits and hence $X$ lies in $\J_{\Q,\I}$.
%So we have the following diagram
%\begin{equation}
%\begin{tikzcd}\label{dia:Y}
%X\ar[r]\ar[d,equal]&I'\ar[r]\ar[d]&Y'\ar[d]\ar[r]&\Sigma X\ar[d,equal]\\
%X\ar[r]&I\ar[r]\ar[d]&Y\ar[d]\ar[r]&\Sigma X\\
%&\Sigma^2Q\ar[r,equal]&\Sigma^2Q&
%\end{tikzcd}
%\end{equation}
%where $Y'\in \pr(\P)=\P*\Sigma \P$ and $Q\in \P$. 
\end{proof}

 A dg $\A$-module $M$ is {\em quasi-representable} if it is quasi-isomorphic to $A^\wedge=\Hom_{\A}(?,A)$ for some $A \in \A$. 
 For dg categories $\A$ and $\B$, 
 we denote by $\rep(\B,\A)$ the full subcategory of $\D(\A\otimes^{\mathbb L} \B^{op})$ 
 whose objects are the dg bimodules $X$ such that $X(-,B)$ is quasi-representable for each object $B$ of $\B$. 
 We define the canonical dg enhancement of $\rep(\B,\A)$ by $\rep_{dg}(\B,\A)$. 
 By \cite[Theorem 6.43]{Chen23} $\rep_{dg}(\B,\A)$ is canonically an exact dg category 
 when $\A$ is an exact dg category and $\B$ is a connective dg category.
 
 Let $I_2$ be the quiver with two vertices $0$ and $1$ and a unique arrow $1\rightarrow 2$.
 We view the path $k$-category $k\I_2$ as a dg $k$-category concentrated in degree 0.
The dg category $\pr(\P)$ is quasi-equivalent to the dg category $\Mor(\P)\iso \rep_{dg}(kI_2, \P)$ whose exact structure is induced from the trivial exact structure on $\P$.
\begin{proposition}\label{prop:0-Auslander}
Let $\A'$ be a connective exact dg category such that $H^0(\A')$ is $0$-Auslander. 
Suppose that $\P\subset \A'$ is the full dg subcategory of projective objects and $\I\subset \P$ is the full dg subcategory of projective-injective objects (so $H^0(\I)$ is covariantly finite in $H^0(\P)$).

%Let $\I$ be the additive closure of $\J$ in $\Q$ where $\Q$ is the additive closure of $\P$ in $\A$ as described by Lemma~\ref{lem:0-Auslander}. 
Then $\A'$ is equivalent (in the sense of Definition~\ref{def:equ}) to $\A=\tau_{\leq 0}(\A_{\P,\I})$.
When $\I=0$, we have that $\A'$ is actually quasi-equivalent to $\A$.
\end{proposition}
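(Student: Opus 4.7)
The plan is to realize both $\A'$ and $\A_{\P,\I}$ as extension-closed dg subcategories inside $\pretr(\P)$ and compare them there. First, I would invoke the universal exact morphism $\A'\to\D^b_{dg}(\A')$ of \cite[Theorem 6.1]{Chen23}: restricting to $\P\subseteq\A'$ and extending via the universal property of $\pretr(-)$, one obtains a dg functor $F\colon\pretr(\P)\to\D^b_{dg}(\A')$ that is quasi-fully faithful. Since $H^0(\A')$ is hereditary with enough projectives, every $X\in\A'$ fits into a conflation $P_1\to P_0\to X$ with $P_i\in\P$, so the essential image of $\A'$ in $\D^b(\A')$ lies in the triangulated subcategory generated by $\P$. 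As $\A'$ generates $\D^b_{dg}(\A')$, this forces $F$ to be essentially surjective, hence a quasi-equivalence; combined with Lemma~\ref{lem:0-Auslander}(3), we obtain $\D^b_{dg}(\A')\iso\D^b_{dg}(\A)$.

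Next, I would identify $\overline{\A'}$ with $\overline{\A}$ as subcategories of $\pretr(\P)$. The inclusion $H^0(\A')\subseteq H^0(\A_{\P,\I})$ is immediate from the projective presentation together with the vanishing $\Ext^1_{\A'}(X,I)=0$ for $I\in\I$ injective in $\A'$. For the reverse inclusion at the level of summand closures, suppose $X\in\P*\Sigma\P$ with triangle $P_1\to P_0\to X\to\Sigma P_1$ and $\Ext^1(X,I)=0$ for all $I\in\I$. Using that $P_1$ has dominant dimension $\geq 1$, choose a conflation $P_1\to I_0\to Z_0$ in $\A'$ with $I_0\in\I$, and form the homotopy pushout $Q$ of $P_1\to P_0$ along $P_1\to I_0$ in $\D^b_{dg}(\A')$. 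This $Q$ sits in triangles $P_0\to Q\to Z_0$ and $I_0\to Q\to X$: the first shows $Q\in H^0(\A')$ by extension-closure of $\A'$ in $\D^b_{dg}(\A')$, while the second splits by the $\Ext^1$-vanishing, exhibiting $X$ as a direct summand of $Q\in H^0(\A')$. Hence $X\in\overline{\A'}$, and together with the previous inclusion this gives $\overline{\A'}=\overline{\A}$, yielding the desired equivalence in the sense of Definition~\ref{def:equ}.

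For the case $\I=0$, the $\I$-approximation step in the pushout argument becomes trivial, and a direct analysis of projective presentations shows that $H^0(\A')$ already equals $H^0(\A_{\P,0})$ (no summand closure is required). Combined with the quasi-fully faithful embedding of $\tau_{\leq 0}\A'$ into $\D^b_{dg}(\A')$ from \cite[Theorem 6.1]{Chen23}, this upgrades the equivalence to a genuine quasi-equivalence $\A'\iso\tau_{\leq 0}\A_{\P,0}=\A$. The main obstacle I anticipate is verifying that the homotopy pushout $Q$ lies genuinely in $\A'$ rather than merely in $\D^b(\A')$; this rests crucially on the extension-closure of $\A'$ inside its dg derived category, a key feature provided by \cite[Theorem 6.1]{Chen23}.
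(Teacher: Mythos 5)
Your proposal is correct and follows essentially the same route as the paper: you identify $\pretr(\P)\iso\D^b_{dg}(\A')$ via generation by projectives, and your homotopy pushout $Q$ along the dominant-dimension conflation $P_1\to I_0\to Z_0$ is exactly the paper's object $U$ in its octahedron argument, with the splitting of $I_0\to Q\to X$ forced by $\Ext^1(X,\I)=0$ placing $X$ in $\overline{\A'}$. The treatment of the $\I=0$ case also matches the intended simplification.
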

\begin{proof}
We have the following diagram
\[
\begin{tikzcd}
\P\ar[r]\ar[d,tail]&\A'\ar[r,tail]&\D^b_{dg}(\A')\\
\pretr(\P)\ar[rru,"\mu"swap]&&
\end{tikzcd}
\]
The induced morphism $\P\rightarrow \D^b_{dg}(\A')$ is quasi-fully faithful and $\D^b(\A')$ is generated as a triangulated category by the objects in $\P$. 
Therefore the morphism $\mu:\pretr(\P)\rightarrow \D^b_{dg}(\A')$ is a quasi-equivalence.
Recall that $\pretr(\P)$ is the dg derived category of $\A$.

It is enough to show that the quasi-essential image of $\A\subset \pretr(\P)$ under the morphism $\mu$ is in the closure of $\A'$ under kernels of retractions in $\D^b_{dg}(\A')$.

Let $X\in \D^b(\A')$ be an object with a triangle \ref{tri:X}. Assume that $\Hom_{\D^b(\A')}(X,\Sigma \I)=0$. 
We claim that $X$ lies in the closure of $\A'$ under kernels of retractions in $\D^b(\A')$.
Indeed, the object $P_1$ in the triangle \ref{tri:X} admits a triangle as follows
\[
P_1\rightarrow I\rightarrow K\rightarrow \Sigma P_1
\]
where $I\in \I$ and $K$ is injective in $H^0(\A')$.
We have the following diagram
\begin{equation}
\begin{tikzcd}\label{dia:Y}
P_1\ar[r,"f"]\ar[d]&P_2\ar[r,"g"]\ar[d]&X\ar[d,equal]\ar[r]&\Sigma P_1\ar[d]\\
I\ar[r]\ar[d]&U\ar[r]\ar[d]&X\ar[r]&\Sigma I\\
K\ar[r,equal]&K& &
\end{tikzcd}.
\end{equation}
By assumption we have that $\Hom(X,\Sigma I)=0$ and therefore $X\oplus I$ is isomorphic to $U$, an extension of $P_2$ and $K$ (so $U\in \A'$). 
So $X$ lies in $\overline{\A'}$.

On the other hand, it is clear that each object in $\A'$ is from an object in $\A$.
\end{proof}
\begin{proof}[Proof of Theorem~\ref{thm:0-Auslander correspondence}]\label{proof:main theorem}
By Remark~\ref{rmk: equivalence 0-Auslander}, the property of being 0-Auslander is invariant under the equivalence relation introduced in Definition~\ref{def:equ}.
Notice that for a 0-Auslander exact dg category, the inclusion $\A\rightarrow \overline{\A}$ induces a quasi-equivalence $\D^b_{dg}(\A)\iso \D^b_{dg}(\overline{\A})$ and $\D^b_{dg}(\A)$ is quasi-equivalent to the pretriangulated hull of the full dg subcategory of $\A$ on the projectives. 
Therefore the map sending the equivalence class of $\A$ to the equivalence class $(\P,\I)$ is well-defined.

For a pair $(\P,\I)$ in (2), we claim that $H^0(\A_{\P,\I})$ is weakly idempotent complete. We have a quasi-equivalence $\pretr(\P)\iso \pretr(\Q_{\P,\I})$. Since $H^0(\Q_{\P,\I})$ is stable under kernels of retractions in $\tr(\Q_{\P,\I})$, we see that $H^0(\A_{\Q_{\P,\I},\J_{\P,\I}})$ is weakly idempotent complete by Remark~\ref{rmk:weakly idempotent complete}.  It is not hard to check that $\A_{\P,\I}=\A_{\Q_{\P,\I},\J_{\P,\I}}$. Therefore $H^0(\A_{\P,\I})$ is weakly idempotent complete and the map sending the equivalence class of a pair $(\P,\I)$ to the equivalence class of $\tau_{\leq 0}\A_{\P,\I}$ is well-defined.

Combining Lemma~\ref{lem:0-Auslander} and Proposition~\ref{prop:0-Auslander}, we see that the above maps are inverse to each other.
\end{proof}
Recall that an extriangulated category is {\em algebraic}, if it is equivalent as an extriangulated category to $H^0(\A)$ for an exact dg category $\A$, cf.~\cite[Proposition-Definition 6.20]{Chen23}. Since $H^0(\tau_{\leq 0}\A)$ and $H^0(\A)$ are equal as extriangulated triangulated categories, we may assume in the above definition that $\A$ is connective. Inspired by recent results of Fang--Gorsky--Palu--Plamondon--Pressland~\cite{FangGorskyPaluPlamondonPressland23a}, we have the following example which relates an algebraic 0-Auslander extriangulated category to a homotopy category of two-term complexes.
\begin{example}\label{exm:idealquotientalgebraic}
\rm{
Let $(\P,\I)$ be a pair in Theorem~\ref{thm:0-Auslander correspondence} (2) where $\P$ is weakly idempotent complete. Put $\A=\tau_{\leq 0}\A_{\P,\I}$. Let $\J$ be the full dg subcategory of injectives in $\A$.
 Then we have a canonical equivalence of extriangulated categories $H^0(\A)/(\J\rightarrow \P)\iso \mathcal K^{[-1,0]}(H^0(\P)/[\I])$.
}
\end{example}
\begin{proof}
We keep the same notations $\P$ and $\I$ for the corresponding subcategories $H^0(\P)$ and $H^0(\I)$ of $H^0(\A)$.
 Let $\A/\I$ be the dg quotient. By \cite[Theorem 6.35]{Chen23} $\A/\I$ carries a canonical exact structure whose corresponding extriangulated category is $H^0(\A)/[\I]$ which is again {0-Auslander}.
So we may replace $\A$ by $\A/\I$ and assume $\I=0$. Then $\J=\Sigma \P$ and $\Hom_{H^0(\A)}(\P,\J)=0$.

The canonical dg functor $\P\rightarrow H^0(\P)$ induces a canonical exact dg functor 
\[
\A_{\P,0}\rightarrow \A_{H^0(\P),0}
\]
which is quasi-dense.
By taking 0-homology on both sides, it induces a canonical functor
\[
H^0(\A)\rightarrow \mathcal K^{[-1,0]}(H^0(\P)).
\]
In $\mathcal K^{[-1,0]}(H^0(\P))$ we have $\Hom(\Sigma \P,\P)=0$. Therefore it induces a canonical functor 
\[
F: \C=H^0(\A)/(\J\rightarrow \P)\iso \mathcal K^{[-1,0]}(H^0(\P)/[\I])=\C'.
\]
Let $X$ and $Y$ be objects in $\A$. Assume we have triangles in $\tr(\P)$
\begin{equation}\label{tri:X'}
P_1\xrightarrow{f} P_0\rightarrow X\rightarrow \Sigma P_1
\end{equation}
and 
\begin{equation}\label{tri:Y'}
Q_1\rightarrow Q_0\rightarrow Y\rightarrow \Sigma Q_1.
\end{equation}
We have 
\[
\Hom_{\C}(\P,\P)\iso \Hom_{H^0(\A)}(\P,\P)\iso\Hom_{\C'}(\P,\P)
\]
and 
\[
\Hom_{\C}(\P,\Sigma\P)\iso \Hom_{H^0(\A)}(\P,\Sigma \P)\iso \Hom_{\C'}(\P,\Sigma\P)=0.
\]
We apply the cohomological functor $\Hom_{\tr(\P)}(-,\P)$ to the triangle \ref{tri:X'} and we get a long exact sequence of Hom spaces in $\tr(\P)$
\[
\ldots\rightarrow \Hom(\Sigma P_1,\P)\rightarrow \Hom(X,\P)\rightarrow \Hom(P_0,\P)\xrightarrow{\Hom(f,\P)} \Hom(P_1,\P)\rightarrow \Hom(X,\Sigma \P)\rightarrow 0. 
\]
By direct inspection we see that $\Hom_{\C}(X,\P)$ is isomorphic to the kernel of the map $\Hom(f,\P)$ and that $\Hom_{H^0(\A)}(X,\Sigma \P)=\Hom_{\C}(X,\Sigma \P)$. %is isomorphic to the cokernel of the map $\Hom(f,\P)$. 
Therefore we have 
\[
\Hom_{\C}(X,\P)\iso \Hom_{\C'}(FX,F \P)
\]
 and $\Hom_{\C}(X,\Sigma \P)\iso \Hom_{\C'}(FX,F \Sigma \P)$.
Now from the $\mathbb E$-triangle in $\C$ given by \ref{tri:Y'}, we have a long exact sequence
\[
\Hom_{\C}(X,Q_1)\rightarrow \Hom_{\C}(X,Q_0)\rightarrow \Hom_{\C}(X,Y)\rightarrow \mathbb E(X,Q_1)\rightarrow \mathbb E(X,Q_0)\rightarrow \mathbb E(X,Y)\rightarrow 0.
\]
We have $\mathbb E(X,Q_1)\iso \Hom_{H^0(\A)}(X,\Sigma Q_1)=\Hom_{\C}(X,\Sigma Q_1)$. By the Five-Lemma, we see that the functor $F$ is fully faithful and is an equivalence of extriangulated categories.
\end{proof}
\begin{remark}
Example~\ref{exm:idealquotientalgebraic} is the main motivation for the study of $0$-Auslander correspondence: Pressland \cite{FangGorskyPaluPlamondonPressland23} showed Example~\ref{exm:idealquotientalgebraic} in certain special cases, and the natural question to extend it to the case of any algebraic 0-Auslander extriangulated category leads to these notes. 
Fang--Gorsky--Palu--Plamondon--Pressland also showed Example~\ref{exm:idealquotientalgebraic} \cite[Theorem 4.2]{FangGorskyPaluPlamondonPressland23a} and the implication from (2) to (1) of Theorem~\ref{thm:0-Auslander correspondence} \cite[Theorem 4.1]{FangGorskyPaluPlamondonPressland23a}.
Combining Theorem~\ref{thm:0-Auslander correspondence} and Example~\ref{exm:idealquotientalgebraic}, we resolve a problem raised in \cite[Problem 3.6]{FangGorskyPaluPlamondonPressland23a} in the algebraic case.
\end{remark}
	\def\cprime{$'$} \def\cprime{$'$}
	\providecommand{\bysame}{\leavevmode\hbox to3em{\hrulefill}\thinspace}
	\providecommand{\MR}{\relax\ifhmode\unskip\space\fi MR }
	% \MRhref is called by the amsart/book/proc definition of \MR.
	\providecommand{\MRhref}[2]{%
		\href{http://www.ams.org/mathscinet-getitem?mr=#1}{#2}
	}
	\providecommand{\href}[2]{#2}
	%\begin{thebibliography}{10}
%		

	%\end{thebibliography}

	\bibliographystyle{amsplain}
	\bibliography{stanKeller}

\end{document}